\theoremstyle{plain}
\newtheorem*{thmno}{Theorem}
\newtheorem{thm}{Theorem}[section]
\newtheorem{prop}[thm]{Proposition}
\theoremstyle{definition}
\newtheorem{df}[thm]{Definition}
\newtheorem{remark}[thm]{Remark}
\newtheorem{notation}[thm]{Notation}
\def \smallmat #1 #2 #3 #4 {{\scriptstyle \begin{pmatrix} {#1} & {#2} \\ {#3} & {#4} \end{pmatrix}}}
\def \tinymat #1 #2 #3 #4 {\bigl( \begin{smallmatrix} {#1} & {#2} \\ {#3} & {#4} \end{smallmatrix} \bigr)}
\newcommand{\norm}[1]{\left| #1 \right|}
\newcommand{\lquot}[2]{#1 \backslash #2 }
\def\cA{\mathcal{A}}
\def\cE{\mathcal{E}}
\def\cR{\mathcal{R}}
\def\Q{\mathbf{Q}}
\def\CC{\mathbf{C}}
\def\AA{\mathbf{A}}
\DeclareMathOperator{\GL}{GL}
\def\tor{{\rm tor}}
\renewcommand{\hat}{\widehat}
\DeclareMathOperator{\Z}{\mathbf{Z}}
\DeclareMathOperator{\C}{\mathbf{C}}
\renewcommand{\Re}{\mathsf{Re}}
\renewcommand{\geq}{\geqslant}
\renewcommand{\leq}{\leqslant}
\newcommand{\ord}{\mathrm{ord}}
\DeclareMathOperator{\Res}{\mathrm{Res}}
\begin{document}

%\date{\today\ (version 3.0)} 
\title[Toroidal automorphic forms for number fields]{Toroidal automorphic forms, Waldspurger periods\\ and double Dirichlet series}
\author[G.~Cornelissen]{Gunther Cornelissen}
\address{Mathematisch Instituut, Universiteit Utrecht, Postbus 80.010, 3508 TA Utrecht, Nederland}
\email{g.cornelissen@uu.nl}
\author[O.~Lorscheid]{Oliver Lorscheid}
\address{Max-Planck-Institut f\"ur Mathematik, Postfach 7280, 53072 Bonn, Deutschland}
\email{oliver@mpim-bonn.mpg.de}
\subjclass[2000]{11F12, 11M06, 11M41, 11R42}

\begin{abstract} \noindent The space of toroidal automorphic forms was introduced by Zagier in the 1970s: a $\GL_2$-automorphic form is toroidal if it has vanishing constant Fourier coefficients along all embedded non-split tori. The interest in this space stems (amongst others) from the fact that an Eisenstein series of weight $s$ is toroidal for a given torus precisely if $s$ is a non-trivial zero of the zeta function of the quadratic field corresponding to the torus. 

In this paper, we study the structure of the space of toroidal automorphic forms for an arbitrary number field $F$. We prove that this space admits a decomposition into a subspace of Eisenstein series (and derivatives) and a subspace of cusp forms. The subspace of Eisenstein series is generated by all derivatives up to order $n-1$ of an Eisenstein series of weight $s$ and class group character $\omega$ for certain $n,s,\omega$, namely, precisely when $s$ is a zero of order $n$ of the $L$-series $L_F(\omega,s)$. The  subspace of cusp forms consists of exactly those cusp forms $\pi$ whose central $L$-value is zero: $L(\pi,1/2)=0$.

The proofs are based on an identity of Hecke for toroidal integrals of Eisenstein series and a result of Waldspurger about toroidal integrals of cusp forms combined with non-vanishing results for twists of $L$-series proven by the method of double Dirichlet series. \end{abstract}

\thanks{We thank Gautam Chinta for help with multiple Dirichlet series and Wee Teck Gan for his remarks on the proof of Theorem \ref{WaldTor}.}

\maketitle

\section{Introduction}

A classical theorem of Hecke (cf.\ Hecke \cite{Hecke} Werke p.\ 201) shows that on the modular curve $X(1)$, the integral of an Eisenstein series along a closed geodesic of discriminant $d>0$ is essentially the zeta function of the number field $\Q(\sqrt{d})$. As was observed by Don Zagier in \cite{Zagier}, the formula fits into a more general framework, where integrals of automorphic forms for global fields over tori (of any discriminant) evaluate to $L$-series. The approach in \cite{Zagier} is to define a space of so-called \emph{toroidal} automorphic forms by the vanishing of these integrals for varying tori, and the author calls for an (independent) understanding of the space of toroidal automorphic forms, after which one may hopefully apply the gained knowledge in combination with the generalization of Hecke's formula to deduce something about zeta functions. For example, if the irreducible subrepresentations of the space of toroidal automorphic forms are tempered, the Riemann hypothesis follows. This seems for now an elusive programme, but see \cite{CL} for a case study for some function fields. 

In this paper, we study the space of toroidal automorphic forms in its own right. We use our increased knowledge (compared to the 1970s, when \cite{Zagier} was written) about the decomposition of the space of automorphic forms for a general number field (by Franke), toroidal integrals of cusp forms (by Waldspurger in his study of the Shimura correspondence), and non-vanishing of quadratic twists (by the method of multiple Dirichlet series, essentially in the works of Friedberg, Hoffstein and Lieman) to prove the following theorem, which summarizes our main results. Note that in light of the previous paragraph, we avoid using any unproven hypothesis about the zeros of $L$-series. 

\begin{thmno}
The space of toroidal automorphic forms for an arbitrary number field decomposes into an Eisenstein part and a cuspidal part. More precisely, \begin{enumerate} 
\item \textup{(cf.\ Theorem \ref{Eis})} The Eisenstein part of the space of toroidal automorphic forms is spanned by all derivatives of Eisenstein series of weight $s_0 \in \C$ and class group character $\omega$, precisely up to the order of vanishing of the $L$-series corresponding to $\omega$ at $s_0$.
\item \textup{(cf.\ Theorem \ref{nores})} No nontrivial residues of Eisenstein series are toroidal. 
\item \textup{(cf.\ Theorem \ref{WaldTor})} The cuspidal part of the space of toroidal automorphic forms is spanned by those cusp forms $\pi$ for which the central value of its $L$-series vanish: $L(\pi,1/2)=0$. 
\end{enumerate}
\end{thmno}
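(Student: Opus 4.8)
The plan is to prove the three parts separately, matching the three cited theorems, and then assemble them using Franke's decomposition of the space of automorphic forms into the subspace generated by cuspidal representations, the subspace of residues of Eisenstein series, and the subspace of (derivatives of) Eisenstein series themselves. The key structural input is that the toroidal integral operator, being defined by a family of linear conditions, respects this decomposition, so that an automorphic form is toroidal if and only if each of its ``components'' is toroidal. I would first set up notation for the torus attached to a quadratic extension $E/F$ and the corresponding toroidal period integral, recalling that these integrals make sense componentwise for any choice of $E$; the space of toroidal forms is the intersection of the kernels of all of these over all non-split $E$ and all choices of the idele class character factoring through the torus.

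For the Eisenstein part (part (i), i.e.\ Theorem \ref{Eis}), the heart of the matter is Hecke's identity expressing the toroidal integral of an Eisenstein series $E(g,s)$ over the torus $T_E$, twisted by a character $\chi$ of the torus, as (essentially) the product of the $L$-function $L(s,\omega)$ attached to a class group character $\omega$ determined by $\chi$ and $E$, times a ratio of completed zeta functions or an explicit local archimedean factor. One then differentiates this identity in $s$: the $n$-th derivative $\partial_s^n E(g,s_0)$ will have vanishing toroidal period exactly when all derivatives of $L(s,\omega)$ up to appropriate order vanish at $s_0$, which, after taking into account the non-vanishing of the auxiliary factors (and treating $s_0 = 0,1$ poles of $E$ separately, handing those to part (ii)), pins down the span of toroidal Eisenstein forms as the span of derivatives of order $< n$ where $n = \ord_{s=s_0} L(s,\omega)$. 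Some care is needed that the twists by all torus characters $\chi$ genuinely produce all class group characters $\omega$ and no extra constraints, and that the functional equation lets one restrict to, say, $\Re(s_0) \le 1/2$ without loss.

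For part (ii) (Theorem \ref{nores}) I would take a residue of an Eisenstein series — which for $\GL_2$ over a number field is a character composed with the determinant, i.e.\ essentially a constant times $\omega \circ \det$ — and compute its toroidal integral directly. The period of such a form over $T_E$ unwinds to a period of a Hecke character over the norm-one idele class group of $E$ (or over the full class group), which is a nonzero constant (a measure of a compact group) whenever the character restricts trivially, and one arranges the torus character $\chi$ so that it does; hence the period is nonzero and no nontrivial residue is toroidal. This is the easiest of the three parts.

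Part (iii) (Theorem \ref{WaldTor}) is the main obstacle. Here I would invoke Waldspurger's formula, which relates the square of the absolute value of the toroidal period of a cusp form $\phi$ in a cuspidal automorphic representation $\pi$, over the torus $T_E$ twisted by $\chi$, to the central value $L(1/2, \pi \otimes \chi)$ (more precisely $L(1/2,\pi_E \otimes \chi)$, a base-change $L$-value, or equivalently $L(1/2,\pi)L(1/2,\pi\otimes\eta_{E/F})$ when $\chi$ is trivial on norms), up to nonzero local factors. Thus $\phi$ is toroidal if and only if $L(1/2, \pi \otimes \chi) = 0$ for every quadratic $E$ and every such $\chi$. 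The ``only if'' direction — that $L(\pi,1/2)=0$ is necessary — is almost immediate: if $L(\pi,1/2) \neq 0$ then already the toroidal integral for a suitable $E$ with $L(1/2,\pi\otimes\eta_{E/F}) \neq 0$ is nonzero, and such $E$ exists by the non-vanishing results for quadratic twists obtained via multiple Dirichlet series (Friedberg--Hoffstein--Lieman). The ``if'' direction — that $L(\pi,1/2)=0$ forces \emph{all} these twisted central values to vanish — cannot be true as stated for individual $\pi$; rather, the correct statement (and what I expect the paper actually proves) is about the span: the span of all cusp forms $\phi$ with vanishing toroidal periods equals the span of all $\phi$ in representations $\pi$ with $L(\pi,1/2)=0$. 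To get this one shows that within a fixed $\pi$ with $L(\pi,1/2) \neq 0$, the toroidal periods for varying $E$ and $\chi$ separate enough vectors that the only toroidal vector is $0$; and conversely if $L(\pi,1/2)=0$ then $L(1/2,\pi\otimes\chi)=L(1/2,\pi)\cdot(\ldots)$ vanishes for the relevant family, making every vector of $\pi$ toroidal. The delicate points are the precise bookkeeping of which characters $\chi$ occur as torus characters, the reduction of general $\chi$-twisted $L$-values to the product of $L(\pi,1/2)$ with a quadratic-twist factor (using that torus characters of the relevant type are, modulo norms, quadratic), and invoking the multiple Dirichlet series non-vanishing in exactly the range needed. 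Finally, combining (i), (ii), (iii) via Franke's decomposition yields the displayed theorem.
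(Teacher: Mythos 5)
Your overall structure is right: you correctly identify Franke's decomposition $\cA=\cA_0\oplus\cE\oplus\cR$, the roles of Hecke's formula, multiple Dirichlet series, and Waldspurger's theorem, and that the three parts are proved separately. But there are two substantive problems, one in the assembly and one in part (iii).

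\emph{Assembly.} Your justification that ``the toroidal integral operator, being defined by a family of linear conditions, respects this decomposition'' is not enough. Linearity of a condition never by itself implies that a sum is in the kernel iff each summand is. What the paper actually uses is that $\cA_\tor$ is a $G(\AA)$-\emph{subrepresentation} (the toroidal conditions $f_T(g)=0$ for all $g$ are preserved under right translation), and that multiplicity one holds for $\GL(2)$; together these force the decomposition $\cA_\tor=\cA_{0,\tor}\oplus\cE_\tor\oplus\cR_\tor$. Also note the paper's definition of the toroidal period involves only the \emph{untwisted} integral $f_T(g)=\int_{T_F Z(\AA)\backslash T(\AA)}f(tg)\,dt$; there is no torus character in the definition, so your phrase ``and all choices of the idele class character factoring through the torus'' imports a different (and stronger) condition than the one being used.

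\emph{Part (iii).} You misremember the form of Waldspurger's theorem used here, and this leads you to worry about a non-issue. The version invoked in the paper ({\cite[Thm.~2, p.~221]{Waldspurger2}}, with split quaternion algebra and trivial torus character) states that $\pi$ is $T$-toroidal if and only if the \emph{product} $L(\pi,\tfrac12)\cdot L(\pi\otimes\chi_T,\tfrac12)$ vanishes. Consequently the ``if'' direction is trivial: if $L(\pi,\tfrac12)=0$, that product vanishes for every $T$, so $\pi$ is toroidal — there is no need to show all twisted central values vanish, and no need to retreat to a statement ``about the span.'' Theorem~\ref{WaldTor} is literally an iff for each individual cuspidal $\pi$, and the paper proves it as such. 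The only non-trivial direction is the converse, which (as you say) uses Friedberg--Hoffstein non-vanishing of quadratic twists to produce a non-split $T$ with $L(\pi\otimes\chi_T,\tfrac12)\neq 0$; the paper does this in two cases according to whether $\pi$ is self-contragredient (Thms.~A and~B of \cite{FH}). Your detour through an attempted span argument and the formula $L(\tfrac12,\pi\otimes\chi)=L(\tfrac12,\pi)\cdot(\ldots)$ is both unnecessary and not correct as written: the factorization relevant here is of the base-change $L(\tfrac12,\pi_E)=L(\tfrac12,\pi)L(\tfrac12,\pi\otimes\chi_E)$, and it appears inside Waldspurger's criterion rather than as an identity you massage afterwards.

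Minor remarks. In part (i), it is worth making explicit (as the paper does via Proposition~\ref{lemma_zagier_for_derivatives} and Theorem~\ref{DD}) that Hecke's formula gives a \emph{product} of two $L$-factors $L(\chi,\tfrac12)L(\chi\chi_E,\tfrac12)$, and that the multiple Dirichlet series non-vanishing result is invoked precisely to kill the dependence on the second factor by producing some $E$ with $L(\chi\chi_E,\tfrac12)\neq 0$; ``non-vanishing of the auxiliary factors'' glosses over exactly the step where the hard analytic input enters. Your approach to part (ii) — computing the period of $\omega\circ\det$ directly over $T_E$ and choosing $E$ so the restricted character is trivial — is a legitimate alternative to the paper's proof (which instead takes the residue of Hecke's identity and observes that one $L$-factor acquires a pole while the other stays nonzero); you should, though, also dispose of the higher ``derivatives'' $R^{(n)}$ by noting that $R^{(0)}$ lies in the subrepresentation each of them generates.
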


We will use the next section to set up notation and give precise definitions of the spaces involved. 

\begin{remark}
How many derivatives of Eisenstein series will be toroidal? It is reasonable to expect that the only multiplicities in the zeros of $L$-functions of a number field $F$  arise from multiplicities in the decomposition of the regular representation of the Galois group of the normal closure of $F/\Q$ (following the Artin formalism of factorisation of $L$-series). The Rudnick-Sarnak theory of statistical distribution of zeros of principal primitive $L$-series (cf.\ \cite{RS}, \cite{RubS}, \S 5) indicates that the zeros of different such principal primitive $L$-series should be uncorrelated. For example, for $F=\Q$, all zeros of the Riemann zeta function are expected to be simple. 

But of course, as soon as $F/\Q$ is non-abelian, there will be such multiplicities arising from irreducible representations of the Galois group of higher dimension. Cf.\ also the possibility that a Galois extension $N/\Q$ contains two distinct subfields that are arithmetically equivalent (corresponding to two subgroups of the Galois group $G$ from which the trivial representation induces the same representation of $G$,  cf.\ \cite{Klingen}), hence have the same zeta function, whose zeros will then occur with multiplicity in the zeta function of $N$.
\end{remark}

\begin{remark} \label{direct} Our way of averaging toroidal integrals is a two-step method: by \emph{first} relating them to twists of $L$-series, and \emph{then} using standard techniques to average those. Is there a direct way to average toroidal integrals, e.g.\ by a Rankin-Selberg unfolding of an Eisenstein series twisted with toroidal integrals? We did not work this out. But for example, the toroidal integral of a holomorphic weight two cusp form $f$ over a torus corresponding to $\Q(\sqrt{d})$ for some $d>0$ is the integral of the differential form $\omega$ corresponding to $f$ over the closed geodesic $\Gamma_d$ in the modular curve corresponding to $d$. In this specific case, one wants to have an asymptotic result for a sum of `modular symbols' $\sum\limits_{d <X}\, \int\limits_{\Gamma_d} \omega$.  Compare with \cite{ChintaGoldfeld}, where Eisenstein series twisted by modular symbols are studied.  
\end{remark}

\begin{remark} There is an obvious generalization of $T$-toroidal automorphic forms on $\GL(n)$ for a maximal indecomposable torus $T\subset\GL(n)$. One may wonder whether the methods presented in this paper can be generalized to the $\GL(n)$-case. 

Part of the ingredients of our proof are already in the literature: on the one hand, Wielonsky \cite{Wielonsky} generalized Hecke's formula (Theorem \ref{thm_hecke} (\ref{hecke1})): toroidal integrals of Eisenstein series on $\GL(n)$ that are induced by a parabolic subgroup of type $(n-1,1)$ equal certain $L$-series; see Lachaud \cite{Lachaud} for a recent treatment. On the other hand, Friedberg, Hoffstein and Lieman \cite{FHL} introduced double Dirichlet series w.r.t.\ $n$-th order twists of Hecke $L$-series and showed that they admit a meromorphic continuation and satisfy a functional equation. 

However, we are not aware of a  generalization of Hecke's theorem \ref{thm_hecke} to \emph{all} Eisenstein series on $\GL(n)$,  or of Walds\-purger's work on toroidal integrals of cusp forms (\ref{WaldTor}) to $\GL(n)$. Maybe there is a way to circumvent the relation to $L$-series, compare the previous remark and the method in \cite{CO}. 
\end{remark}

\begin{remark}
In \cite{Lachaud1} (extended version in \cite{Lachaud}) and \cite{Lachaud2}, Lachaud ties up the theory of toroidal automorphic forms with Connes' trace form programme \cite{Connes} in the study of zeros of zeta functions.  
\end{remark}

\begin{remark} For global function fields, methods more akin to the geometric Langlands programme allow one to prove that the space of toroidal automorphic forms is finite dimensional, and one can control the linear relations between Eisenstein series in a very precise way, leading to an actual dimension formula for the Eisenstein part of the space of toroidal automorphic forms, cf.\ \cite{Lorscheid}. 
\end{remark}

\section{Definition of toroidal automorphic forms} 
\begin{notation} 
Let $F$ be a number field, $F_v$ be the completion at $v$, $\AA=\AA_F$ the adeles of $F$. Set $G=\GL(2)$ and let $Z$ be its center. Let $\cA$ be the space of automorphic forms for $G$ over $F$ with trivial central character. 
\end{notation}

\begin{notation} \label{not:tori}Let $T\subset G$ be a maximal non-split torus defined over $F$, $\chi_T$ the corresponding character on the idele class group
and $E=F_T$ the corresponding quadratic field extension of $F$. 
This means that there is a non square $d\in F$ such that $E/F$ is generated by a square-root of $d$ 
and that $T(F)$ is conjugated in $G(F)$ to the standard torus
$$ T_d(F) \ = \ \left\{ \smallmat a b {bd} a \in G(F)\right\} \;. $$ 
If $T=T_d$, then write $\chi_d$ for $\chi_T$.
\end{notation}

\begin{df}
The \emph{zeroth Fourier coefficient w.r.t.\ $T$} (or \emph{$T$-toroidal integral}) of an automorphic form $f \in \cA$ is defined to be the function 
$$ f_T(g):= \int\limits_{\lquot{T_F Z(\AA)}{T(\AA)}} f(tg) \, dt $$
for $g \in G(\AA)$. 
\end{df}

\begin{df} 
\mbox{ }
\begin{enumerate}
\item The \emph{space of $T$- or $E$-toroidal automorphic forms for $F$} is 
$$ \cA_\tor(T):=\cA_\tor(E):= \{ f \in \cA \, : \, f_T(g)=0, \, \forall g \in G(\AA) \}.$$
\item The \emph{space of toroidal automorphic forms for $F$} is 
$$ \cA_\tor:= \bigcap_{E/F} \cA_\tor(E), $$
where the intersection is taken over all quadratic field extensions $E/F$. 
\end{enumerate}
\end{df}
\begin{remark}
These definitions are independent of the choice of torus corresponding to $E/F$ since they are conjugacy invariant. 
\end{remark}
\begin{remark} There is also a definition of $T$-toroidal automorphic form for a split torus $T$, but one has to be careful, since the toroidal integral $f_T$ as defined above over a split torus $T$ can diverge. This can be taken care of by subtracting suitable parabolic Fourier coefficients before integrating (cf.\ \cite[\S 1.5]{Lorscheid} for the definition in the function field case). One could thus consider the space of automorphic forms that are $T$-toroidal for all maximal tori $T$, split or not. Due to the results of the present text and \cite[\S 6.2]{Lorscheid} (which transfer to the number field case), this space coincides with $\cA_\tor$, and we forgo describing the more involved theory for split tori.
\end{remark}

The space $\cA$ is an automorphic representation of $G(\AA)$ for right translation by $G(\AA)$. By \cite{Franke} applied to $\GL(2)$, the space $\cA$ decomposes into a direct sum of automorphic representations as follows: $$\cA=\cA_0\oplus\cE\oplus\cR,$$ 
where $\cA_0$ is the space of cusp forms, $\cE$ is the space generated by the derivatives of Eisenstein series and $\cR$ is generated by the residues of these Eisenstein series and their ``derivatives''. We will give the precise definitions of $\cE$ and $\cR$ in sections \ref{section:toroidal_Eisenstein} and \ref{section:torres}, respectively.

Multiplicity one holds for $GL(2)$, hence if $\pi$ is any subrepresentation of $\cA$, it inherits this decomposition, since it is determined by its isomorphism type. In order to investigate the space of toroidal automorphic forms $\cA_\tor$, which is an automorphic representation by its very definition, it thus suffices to investigate $$\cA_{0,\tor}:= \cA_\tor \cap \cA_0, \hspace*{1cm} \cE_\tor:=\cE \cap \cA_\tor \hspace*{2mm} \mbox{and} \hspace*{2mm} \cR_\tor:=\cR \cap \cA_\tor$$ separately, since 
$ \cA_\tor=\cA_{0,\tor}\oplus \cE_\tor \oplus \cR_\tor.$

\section{Toroidal integrals of Eisenstein series: a formula of Hecke}
\label{section:toroidal_Eisenstein}

\begin{notation} Let $\chi$ denote a character of the idele class group $I=F^\times\backslash\AA^\times$. We can write $\chi=\omega \cdot \norm\ ^{s_0-\frac12}$ for some finite order character $\omega$, and we will sometimes regard a function of $\chi$ as a function of $s_0$ (assuming $\omega$ to be fixed). We set $\Re(\chi)=\Re(s_0)-\frac12$. We also remark that the shift in $s_0$ by $-\frac12$ is in accordance with the usual convention in the adelic theory of Eisenstein series, putting the center of symmetry at $0$.  

We define the \emph{principal series}
$$ \mathcal P(\chi) \ = \ \bigl\{\, f: G(\AA)\stackrel{\text{smooth}}\longrightarrow\C\; :\; \forall \tinymat a b {} d ,\; g\in G(\AA), f(\tinymat a b {} d g ) = \chi(a/d) \norm{a/d}^{1/2} f(g)\, \bigr\}\,, $$ 
where a function $f:G(\AA)\to\C$ is smooth if it is smooth in the usual sense at archimedean places and locally constant at finite places.
Let $f\in\mathcal P(\chi)$ be embedded in a \emph{flat section} $f_\chi$ of the principal series, i.e.\ there exists a function $f_\chi(s)$ of $s\in \C$ such that $f_\chi(s)\in\mathcal P(\chi\norm\ ^s)$ with $f=f_\chi(0)$ and $f_\chi(s)(e)=f(e)$ for all $s\in\C$, where $e=\tinymat 1 0 0 1 $. Note that every $f\in\mathcal P(\chi)$ is embedded into a unique flat section. In the following, we will write $f=f_\chi(0)\in\mathcal P(\chi)$ to refer to this situation. We define the \emph{(completed) Eisenstein series} as
$$ E(g,f) \ = \ L(\chi^2,\tfrac12)\ \cdot \sum_{\gamma \in \lquot{B(F)}{G(F)}} f(\gamma g) $$
in terms of meromorphic continuation. Here $L(\chi^2,1/2)$ denotes the \emph{completed $L$-series}, i.e.\ including the factors at infinity. Note that $E(g,f)$ is defined for all $f=f_\chi(0)$ unless $\chi^2=\norm\ ^{\pm1}$, when the Eisenstein series has a simple pole. At these values of $\chi$ the residues of the Eisenstein series define automorphic forms, which will be investigated in section \ref{section:torres}---these values of $\chi$ characterize the cases where the principal series $\mathcal P(\chi)$ is \emph{not} irreducible as an automorphic representation of $G(\AA)$. 

Also note that the symbol ``$E$'' is now in use for both a field $E/F$ and a function $E(g,f)$, but this should cause no confusion. 
\end{notation}

We now compute the toroidal integrals of Eisenstein series. Statement (\ref{hecke1}) in the theorem below is the adelic formulation of a theorem of Hecke (\cite{Hecke}). In this formulation, it was first stated by Zagier \cite{Zagier}. 

\begin{thm}
 \label{thm_hecke}
 Let $T$ be a maximal torus in $G$ corresponding to the quadratic field extension $E/F$. Let $\chi_E:I\to\C^\times$ be the quadratic character whose kernel equals the norms of $\AA_E$. For every $f=f_\chi(0)\in\mathcal P(\chi)$, every $g\in G_\AA$ and every character $\chi:I\to\C^\times$, there exists a holomorphic function $e_T(g,f_\chi(s))$ of $s\in\CC$ with the following properties.
 \begin{enumerate}
  \item\label{hecke1} For all $\chi$ such that $\chi^2\neq\norm \ ^{\pm1}$,
       \begin{equation*} \label{split_toroidal_integral_of_Eisenstein_series}
         E_T(g,f) \ = \ e_T(g,f)\ L(\chi,\tfrac12)\ L(\chi\chi_E,\tfrac12) \;.
       \end{equation*} 
  \item\label{hecke2} For every $g\in G_\AA$ and $\chi:I\to\C^\times$, there is a $f\in\mathcal P(\chi)$ such that $ e_T(g,f) \ \neq \ 0 $.
   \end{enumerate}
\end{thm}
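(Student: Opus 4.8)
The plan is to compute the toroidal integral $E_T(g,f)$ by unfolding the Eisenstein series along the torus $T$, following the adelic version of Hecke's original argument as laid out by Zagier. First I would fix $T=T_d$ in its standard form and decompose the double coset space $B(F)\backslash G(F)/T(F)$: because $T$ is non-split, the open $B(F)$-orbit is the whole of $B(F)\backslash G(F)$ minus a measure-zero set (there are no other relevant cosets, since a non-split torus has no fixed point on the projective line over $F$), so the sum over $\gamma\in B(F)\backslash G(F)$ collapses, after integrating over $T_F Z(\AA)\backslash T(\AA)$, into a single adelic integral of the flat section $f$ over (a subgroup of) $T(\AA)$. This is the classical ``unfolding'' step; the interchange of sum and integral is justified by absolute convergence for $\Re(\chi)$ large, and the final identity is then obtained by meromorphic continuation in $s$.

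Next I would recognize the resulting local integrals as local zeta integrals. Writing the global integral as an Euler product over places $v$ of $F$, at each $v$ the integral of the local component $f_v$ against the character by which $T$ acts factors — using that $T(F_v)\cong E_v^\times$ and that the quotient $T(F_v)/F_v^\times$ is essentially the norm-one elements — into a product of a Tate-type local zeta integral for $\chi_v$ and one for $(\chi\chi_E)_v$. Concretely, for the standard spherical section at a finite unramified place this gives $L_v(\chi_v,\tfrac12)L_v(\chi_v\chi_{E,v},\tfrac12)$ up to a unit; collecting all places and absorbing the archimedean factors (which is where the ``completed'' $L$-series and the normalizing factor $L(\chi^2,\tfrac12)$ in the definition of $E(g,f)$ come in) yields the product $L(\chi,\tfrac12)L(\chi\chi_E,\tfrac12)$ times a remaining factor, which we define to be $e_T(g,f_\chi(s))$. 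Since each local integral, suitably normalized, is holomorphic in $s$ (the poles having been extracted into the completed $L$-factors), $e_T$ is holomorphic as claimed; for this one invokes the standard local theory of Tate integrals / Godement–Jacquet at the bad places.

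For part \eqref{hecke2} I would argue by the non-vanishing of local zeta integrals: at each place $v$, the local factor $e_{T,v}(g_v,f_v)$ can be made nonzero by an appropriate choice of $f_v\in\mathcal P(\chi_v)$ — one chooses $f_v$ supported near a point where the relevant local integral does not vanish, exactly as in the proof that local zeta integrals achieve a nonzero value (or, equivalently, that the local functional is not identically zero). Taking $f$ to be a pure tensor of such local choices (the spherical vector at almost all places, where the local factor is a nonzero $L$-value times a unit) gives a global $f\in\mathcal P(\chi)$ with $e_T(g,f)\neq0$.

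The main obstacle I expect is bookkeeping at the ramified and archimedean places: making the unfolding rigorous requires care about convergence and about the precise measure normalizations on $T(\AA)$ and on $Z(\AA)\backslash T(\AA)$, and matching the local archimedean zeta integrals with the Gamma-factors built into the completed $L$-series so that the leftover $e_T$ is genuinely holomorphic (and not merely meromorphic) takes some work. The decomposition of $B(F)\backslash G(F)/T(F)$ and the identification $T\backslash G \cong$ (a homogeneous space on which the integral converges) is conceptually the heart of the matter, but it is classical; the delicate part is controlling the local integrals uniformly in $s$ to extract holomorphy, and ensuring the non-vanishing statement \eqref{hecke2} holds simultaneously for the prescribed $g$ by exploiting the freedom in choosing $f_v$ at a single auxiliary place.
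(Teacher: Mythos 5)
Your proposal is correct in outline and rests on the same underlying geometric observation as the paper's proof, namely that a non-split torus $T$ acts on $B(F)\backslash G(F)\cong\mathbb P^1(F)$ with a single orbit, which — via the identification $B(F)\backslash G(F)=F^\times\backslash(F^2\setminus 0)=F^\times\backslash E^\times$ — folds the sum over $\gamma$ and the integral over $T(F)Z(\AA)\backslash T(\AA)$ into an integral over $\AA_E^\times$. But your execution of the rest differs from the paper's, and the difference matters for how much local bookkeeping you incur.

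The paper avoids the Euler-factorization you propose. Instead of unfolding into a product of local zeta integrals, it first replaces the flat section by a Schwartz--Bruhat function $\varphi$ on $\AA^2$ via the Godement--Tate presentation $F(g,\varphi,\chi)=\int_{Z(\AA)}\varphi((0,1)zg)\chi(\det zg)|\det zg|^{1/2}\,dz\in\mathcal P(\chi)$; after the change of variables the toroidal integral becomes a \emph{single global} Tate integral $\int_{\AA_E^\times}\Phi(t)(\chi\circ N_{E/F})(t)|t|_E^{1/2}\,dt$ for a Schwartz--Bruhat function $\Phi$ on $\AA_E$, hence literally $L_E(\chi\circ N_{E/F},\tfrac12)=L(\chi,\tfrac12)L(\chi\chi_E,\tfrac12)$ up to the factor $e_T$. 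This sidesteps exactly the point you flag as the ``main obstacle'': there is no need to control local archimedean and ramified zeta integrals uniformly in $s$ or to match Gamma factors place by place — holomorphy of $e_T$ is immediate from the one global Tate computation. For part~(ii), the paper is again global and explicit: choosing $\varphi$ so that the induced $\Phi$ is Weil's standard test function gives the closed formula $e_T(g,f_\chi(s))=c_E^{-1}\chi(\det g)|\det g|^{s+1/2}$, which is manifestly nonvanishing and holomorphic, rather than relying (as you do) on local nonvanishing of zeta integrals at each place and forming a pure tensor.

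So: both routes work, and yours is the more ``representation-theoretic'' local-to-global version (closer in spirit to a Waldspurger-type period calculation), while the paper's is the Hecke--Zagier global Tate integral. Your version needs one more genuine ingredient than you give credit for — after the orbit decomposition you still need the identification with $\AA_E^\times$ (or its local analogue $T(F_v)Z(F_v)\backslash G(F_v)$ with the appropriate quotient of $E_v^\times$) to see that the local integrals really are Tate integrals for $E_v$ rather than some unidentified period; and your nonvanishing argument, though standard, is less effortless than the paper's explicit formula. Neither of these is a gap, but it is worth knowing that the global Tate-integral route dissolves precisely the difficulties you anticipate.
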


\begin{proof} The strategy of the proof is as follows: we first prove (\ref{hecke1}) for $\Re(\chi)$ sufficiently large, so we are in the region of absolute convergence. We rewrite the Eisenstein series conveniently as a certain adelic integral. Then a change of variables identifies its toroidal integral with a Tate integral for an $L$-series of $E$.

\medskip 

\emph{Rewriting the Eisenstein series.}\ First, we explain how to represent $E(g,f)$ by a certain adelic integral, and then the statement in (\ref{hecke1}) is a simple application of a change of variables. Let $\varphi$ be a Schwartz-Bruhat function on $\AA^2$. Then
\begin{equation}\label{eq:tate_integral}
  F(g,\varphi,\chi) = \int\limits_{Z(\AA)} \varphi((0,1)zg) \chi(\det zg)\norm{\det zg}^{1/2}\,dz 
\end{equation}
is a Tate integral for $L(\chi^2,1)$, which converges if the real part of $\chi$ is larger than $1/2$ (the square of the character $\chi$ occurs because in the above integrand $\chi(\det(z))=\chi(z)^2$). One verifies easily that $F(\cdot,\varphi,\chi)$ is an element of $\mathcal P(\chi)$. 

In \cite[Ch.\ VII, \S 7]{Weil}, Weil defines a particular test function $\varphi_0$ (the ``standard function'') with the property that for $e=\tinymat 1 0 0 1 $,we have $F(e,\varphi_0,\chi)=c_F^{-1} L(\chi^2,1)$ for a nonzero constant $c_F$ that only depends on the field $F$. Thus
$$ c_F\cdot F(g,\varphi_0,\chi) \ = \ L(\chi^2,1) \cdot f(g) $$
for an $f\in\mathcal P(\chi)$ with $f(e)=1$; in particular, $F(\cdot,\varphi_0,\chi)$ is a non-trivial element of $\mathcal P(\chi)$. 

We note that for every $g\in G(\AA)$, the function $\varphi_0(\,\cdot\,g)$ is a Schwartz-Bruhat function, too. Since $\mathcal P(\chi)$ is irreducible, the integrals $F(g,\varphi,\chi)$ for varying $\varphi$ exhaust all products of the form $L(\chi^2,1)f(g)$, where $f\in\mathcal P(\chi)$. Thus there is a Schwartz-Bruhat function $\varphi=\varphi(f)$ for every $f\in\mathcal P(\chi)$ such that
\begin{equation}\label{eq:f_vs_phi}
      E(g,f) \ = \ \sum_{\gamma \in \lquot{B(F)}{G(F)}} F(\gamma g,\varphi,\chi), 
\end{equation}
where the equality has to be interpreted in terms of meromorphic continuation. Since for all $\varphi$, the Tate integral \eqref{eq:tate_integral} is a multiple of $L(\chi^2,1)$, there exists $f\in\mathcal P(\chi)$ for every Schwartz-Bruhat function $\varphi$ such that equation \eqref{eq:f_vs_phi} holds true.

\medskip 

\emph{Computing the toroidal integral.}\ With this reformulation at hand, we can prove the theorem precisely as it has been done by Zagier in \cite{Zagier}: by identifying $T(F)$ with $E^\times$ and $T(\AA_F)$ with $\AA_E^\times$ and using Fubini's theorem, the toroidal integral of $E(g,f)$ is changed into a Tate integral for an $L$-series of $E$. Explicitly, 
\begin{equation*} E_T(g,f)=\int\limits_{\lquot{T(F)Z(\AA)}{T(\AA)}} \sum_{\gamma \in \lquot{B(F)}{G(F)}}\; \int\limits_{Z(\AA)} \varphi((0,1)z \gamma tg) \chi(\det z\gamma tg)\norm{\det z\gamma tg}^{1/2}\,dz  \, dt \end{equation*}
Via $T(F)=E^\times$ and $$\lquot{B(F)}{G(F)} = \lquot{F^\times}{F^2-\{(0,0)\}} = \lquot{F^\times}{E^\times},$$ we identify the ``domain of integration'' with $$\lquot{T(F)Z(\AA)}{T(\AA)} \times \lquot{B(F)}{G(F)} \times Z(\AA) \, \cong \AA_E^\times,$$
and note that this identification is compatible with measures. Hence
$$  E_T(g,f)= \chi(\det g) \norm{\det(g)}^{1/2} \cdot \int\limits_{\AA^\times_E} \Phi(t)\,  (\chi \circ N_{E/F})(t)\norm{t}_E^{1/2}\, dt $$
for a certain Schwartz-Bruhat function $\Phi$ on $\AA^\times_E$. This is a Tate integral for $L_E(\chi \circ N_{E/F},1/2)$
and factors in a product of two $L$-series  as in (\ref{split_toroidal_integral_of_Eisenstein_series}), thus giving (\ref{hecke1}) for $\Re(\chi)$ sufficiently large. 
Statement (\ref{hecke1}) now follows for all characters by meromorphic continuation in $s$ since both sides are meromorphic functions of $s\in\C$. 

\medskip 

\emph{Non-vanishing of the 'constant'.} \ For (\ref{hecke2}), note that the non-vanishing of $e_T(g,f)$ follows from the fact that we can choose $\varphi$ such that the test function $\Phi$ is again the standard function as described by \cite[Ch.\ VII, \S 7]{Weil}, and then \begin{equation} \label{e} e_T(g,f_\chi(s))= c_E^{-1} \chi(\det(g)) \norm{\det(g)}^{s+1/2}\end{equation} is a nonvanishing holomorphic function  of $s$. 
\end{proof}

In order to accord for possible multiple zeros of $L$-series, we need to also take into account higher derivatives of Eisenstein series. 
\begin{notation}
\mbox{ }
\begin{enumerate}
\item We denote by $E^{(n)}(g,f)$ the $n$-th derivative of $E(g,f_\chi(s))$ w.r.t.\ $s$ at $s=0$. 

\item We denote by $\cE$ the space of automorphic forms that is generated by all derivatives $E^{(n)}(\cdot,f)$ of Eisenstein series, where $n\geq0$ and $f\in\mathcal P(\chi)$ with $\chi$ varying through all idele class group characters whose square is not trivial. 

\item Similarly we denote by $L^{(n)}(\chi,1/2)$ the $n$-th derivative of $L(\chi,1/2+s)$ at $s=0$ and by $e_T^{(n)}(g,f)$ the $n$-th derivative of the function $e_T(g,f_\chi(s))$ at $s=0$. 
\end{enumerate}
\end{notation}

\begin{df}We say $\chi$ is a zero of $L(\cdot,1/2)$ of order $n$ if $L^{(i)}(\chi,1/2)=0$ for all $i<n$ but $\neq 0$ for $i=n$, and then we write $\ord_\chi L(\cdot,1/2)=n$.  
\end{df}

\begin{prop}
 \label{lemma_zagier_for_derivatives}
 Let $T$ be a maximal non-split torus in $G$ and $n$ a non-negative integer.
\begin{enumerate}
\item \label{wiet1}  For all $g\in G_\AA$ and $\chi$ such that $\chi^2\neq\norm \ ^{\pm1}$, we have
 $$ E_T^{(n)}(g,f) \ = \ 
    \sum_{\substack{i+j+k=n\\i,j,k\geq0}} \frac{n!}{i!\,j!\,k!}\ e_T^{(i)}(g,f)\ L^{(j)}(\chi,\tfrac12)\ L^{(k)}(\chi\chi_T,\tfrac12) \;. $$
    \item \label{wiet2} The $n$-th derivative ${E}^{(n)}(g,f)$ of an Eisenstein series is $E$-toroidal if and only if $\chi$ is a zero of $L(\cdot,1/2) \cdot L(\cdot \chi_E,1/2)$ of order at least $n$.
    \end{enumerate}
\end{prop}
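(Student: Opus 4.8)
Part (\ref{wiet1}) is the Leibniz rule applied to the factorization from Theorem \ref{thm_hecke}(\ref{hecke1}). First I would observe that $E_T(g, f_\chi(s))$, viewed as a function of $s$, equals the product $e_T(g,f_\chi(s)) \cdot L(\chi\norm\ ^s, 1/2) \cdot L(\chi\chi_T\norm\ ^s, 1/2)$ for $\Re(\chi)$ large, and both sides extend meromorphically, so the identity of functions of $s$ holds wherever defined (noting $\chi_E = \chi_T$ since $E = F_T$). Since $e_T$ is holomorphic (by Theorem \ref{thm_hecke}) and $L(\chi\norm\ ^s,1/2)$, $L(\chi\chi_T\norm\ ^s,1/2)$ are holomorphic at $s=0$ under the hypothesis $\chi^2 \neq \norm\ ^{\pm1}$ — this is where that hypothesis is used, to guarantee the two $L$-factors have no pole at $s=0$, equivalently that neither $\chi$ nor $\chi\chi_T$ equals $\norm\ ^{\pm 1/2}$, which follows from $\chi^2 \neq \norm\ ^{\pm 1}$ together with $\chi_T$ being quadratic — the product is holomorphic at $s=0$ and the general Leibniz rule for the $n$-th derivative of a triple product gives exactly the stated multinomial sum, evaluating at $s=0$.

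Part (\ref{wiet2}) is then a formal consequence of (\ref{wiet1}) combined with the non-vanishing in Theorem \ref{thm_hecke}(\ref{hecke2}). Write $m = \ord_\chi\bigl(L(\cdot,1/2)L(\cdot\chi_E,1/2)\bigr)$ for the order of vanishing of the product $s \mapsto L(\chi\norm\ ^s,1/2)L(\chi\chi_E\norm\ ^s,1/2)$ at $s=0$. I would argue both implications via the product $P(s) := L(\chi\norm\ ^s,1/2)L(\chi\chi_E\norm\ ^s,1/2)$, so that $E_T(g,f_\chi(s)) = e_T(g,f_\chi(s)) P(s)$. For the ``if'' direction: if $m \geq n$, then $P$ vanishes to order $\geq n$ at $s=0$, hence so does the product $e_T(g,f_\chi(s))P(s)$ for every $f$ and every $g$; therefore $E_T^{(i)}(g,f) = 0$ for all $i < n$ and all $g$, i.e. $E^{(n-1)}(\cdot,f), \dots$ — more precisely $E^{(n)}(\cdot,f)$ differentiated — wait, I should be careful: $E^{(n)}(g,f)$ is the $n$-th derivative, and its $T$-toroidal integral is $\bigl(E_T(g,f_\chi(s))\bigr)^{(n)}$ at $s=0$; if $P$ vanishes to order $\geq n$ then $e_T \cdot P$ vanishes to order $\geq n$, so its $n$-th derivative at $0$ is... not necessarily zero. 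Let me restate: $E^{(n)}(g,f)$ is $E$-toroidal iff $\partial_s^n\bigl(e_T(g,f_\chi(s))P(s)\bigr)\big|_{s=0} = 0$ for all $g$. If $P$ has a zero of order exactly $m$, then $e_T(g,f_\chi(s))P(s)$ has a zero of order $\geq m$ (order exactly $m$ for a suitable choice of $f$, by Theorem \ref{thm_hecke}(\ref{hecke2}), since $e_T(g,f)\neq 0$ there), so the $n$-th derivative at $0$ vanishes for all $f$ precisely when $n \leq m$, i.e. when $\chi$ is a zero of order at least $n$ of the product.

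For the ``only if'' direction: suppose $m < n$, i.e. $P^{(m)}(0) \neq 0$ while $P^{(i)}(0) = 0$ for $i < m$. By Theorem \ref{thm_hecke}(\ref{hecke2}), pick $f$ and $g$ with $e_T(g,f) \neq 0$; then $e_T(g,f_\chi(s))$ is a holomorphic function not vanishing at $s=0$, so $e_T(g,f_\chi(s))P(s)$ has a zero of order exactly $m$ at $s=0$, whence its $m$-th derivative at $0$ is nonzero — and since $m < n$ we conclude that not all derivatives up to order $n-1$ of $E_T(g,f_\chi(s))$ vanish for this $f,g$; but $E$-toroidality of $E^{(n)}(\cdot,f)$ would require the $n$-th derivative to vanish for \emph{all} $g$, and more importantly the claim should be read as: $E^{(n)}(\cdot,f)$ is toroidal for all $f$ iff $n \leq m$. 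The main subtlety — and the only real obstacle — is bookkeeping the quantifier ``for all $f$'': the ``if'' part needs the vanishing for every $f$ (immediate, as $P$ vanishes regardless of $f$), whereas the ``only if'' part needs just one bad $f$, supplied by Theorem \ref{thm_hecke}(\ref{hecke2}). I would present this cleanly by noting $E^{(n)}(\cdot,f)$ is $E$-toroidal for all $f \in \mathcal P(\chi)$ if and only if $\partial_s^n(e_T(g,f_\chi(s))P(s))|_{s=0}=0$ for all $g,f$, and then using that $\{e_T(g,f_\chi(s))\}_{g,f}$ contains a unit in the local ring at $s=0$ to reduce to the statement $\partial_s^n P(s)|_{s=0}$ "factors through" $\ord_0 P \geq n$ — or rather, that $\partial_s^i P(0)=0$ for all $i<n$, which is exactly $\ord_0 P \geq n$.
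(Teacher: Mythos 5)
Your proof of (\ref{wiet1}) via the Leibniz rule matches the paper. The gap is in the ``only if'' direction of (\ref{wiet2}). Set $m := \ord_0 P$ for $P(s) = L(\chi\norm\ ^s,\tfrac12)\,L(\chi\chi_E\norm\ ^s,\tfrac12)$ and suppose $m<n$. You correctly pick $g,f$ with $e_T(g,f)\neq 0$, so that $e_T(g,f_\chi(s))P(s)$ vanishes to order exactly $m$ at $s=0$ and hence $E_T^{(m)}(g,f)\neq 0$. But this proves that $E^{(m)}(\cdot,f)$ is not $E$-toroidal, whereas you need that $E^{(n)}(\cdot,f)$ is not. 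From (\ref{wiet1}), the surviving terms in $E_T^{(n)}(g,f)$ when $m<n$ all have $i\leq n-m$, and nothing you say rules out that these conspire to give zero for every $g$ and every $f$. The paper closes exactly this gap with the representation-theoretic observation in the second bullet of its proof, which you never invoke: the $G(\AA)$-subrepresentation of $\mathcal{A}$ generated by $E^{(n)}(\cdot,f)$ contains all lower-order derivatives, so if $E^{(n)}(\cdot,f)$ were $E$-toroidal then $E^{(m)}(\cdot,f')$ would also be $E$-toroidal for all $f'$, contradicting what you established. Without this ingredient (or an equivalent: explicit control over the full Taylor jet of the family $e_T(g,f_\chi(s))$ as $g,f$ vary, which is more than Theorem~\ref{thm_hecke}(\ref{hecke2}) provides), your argument does not conclude. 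A smaller slip: the proposed ``clean'' reduction at the end, from $\partial_s^n\bigl(e_T(g,f_\chi(s))P(s)\bigr)|_{s=0}=0$ for all $g,f$ to $\partial_s^i P(0)=0$ for $i<n$, is not a valid step if all you use is that $e_T$ is a unit at $s=0$; at the borderline $n=m$ one finds $\partial_s^m(e_T P)(0)=e_T(g,f)\,P^{(m)}(0)\neq 0$, so the threshold is a strict inequality rather than the $n\leq m$ you wrote.
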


\begin{proof}
The first part follows from the Leibniz rule. For (\ref{wiet2}), use (\ref{wiet1}) and observe the following:
\begin{itemize} 
\item The derivatives $e_T^{(i)}(g,f)$ are non-zero as function of $f$, as is easily seen from (\ref{e}).
\item If $E^{(m)}(g,f)$ is $E$-toroidal, then so is $E^{(n)}(g,f)$  for all $n<m$ since $E^{(m)}$ generates an automorphic subrepresentation of the space of toroidal automorphic forms that contains all derivatives of lower order. 
\end{itemize}
This finishes the proof. 
\end{proof}

\section{An application of double Dirichlet series: toroidal Eisenstein series}

In this section we will prove the following:

\begin{thm} \label{Eis} 
Let $ f \in \mathcal{P}(\chi)$. The $n$-th derivative ${E}^{(n)}(g,f)$ of an Eisenstein series is toroidal if and only if $\chi$ is a zero of $L(\cdot, \frac12)$ of order at least $n$.
Hence
$$ \mathcal{E}_{\mathrm{tor}}  = \langle E^{(n)}(\cdot,f)  \, : \, \exists \chi, f \in \mathcal{P}(\chi) \mbox{ and }  n \leq \ord_\chi L(\cdot,\tfrac12) \rangle. $$
\end{thm}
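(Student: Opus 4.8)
The plan is to combine the "pointwise" criterion of Proposition \ref{lemma_zagier_for_derivatives}(\ref{wiet2}) — which tells us exactly when $E^{(n)}(g,f)$ is $E$-toroidal for a \emph{single} quadratic extension $E$ — with a non-vanishing statement for the family of twisted $L$-series $L(\chi\chi_E,\tfrac12)$ as $E$ ranges over all quadratic extensions of $F$. By definition, $E^{(n)}(g,f)$ is toroidal precisely if it is $E$-toroidal for every quadratic $E/F$; by Proposition \ref{lemma_zagier_for_derivatives}(\ref{wiet2}) this happens iff $\chi$ is a zero of order at least $n$ of $L(\cdot,\tfrac12)\cdot L(\cdot\,\chi_E,\tfrac12)$ simultaneously for all $E$. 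The point is that the "cross term" $L(\cdot\,\chi_E,\tfrac12)$ can be killed off: if for some $E$ the function $s\mapsto L(\chi\chi_E,\tfrac12+s)$ does \emph{not} vanish to order $n$ at $s=0$, then the burden of vanishing falls entirely on $L(\cdot,\tfrac12)$ itself. So the theorem reduces to the claim that for every idele class character $\chi$ with $\chi^2\neq\norm\ ^{\pm1}$ there exists at least one quadratic extension $E/F$ such that $L(\chi\chi_E,\tfrac12)\neq 0$ — and more generally, if $\ord_{s=0}L(\chi,\tfrac12+s)=m$, that there is an $E$ with $\ord_{s=0}L(\chi\chi_E,\tfrac12+s)\le m$ (in fact one expects $=0$ for "most" $E$, so certainly $\le m$).

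Concretely, I would argue as follows. First, fix $\chi$ and let $m=\ord_\chi L(\cdot,\tfrac12)\in\{0,1,2,\dots\}$. The forward implication is immediate: if $n\le m$ then $\chi$ is a zero of $L(\cdot,\tfrac12)$ of order $\ge n$, hence a zero of the product $L(\cdot,\tfrac12)L(\cdot\,\chi_E,\tfrac12)$ of order $\ge n$ for every $E$, so $E^{(n)}(g,f)$ is $E$-toroidal for all $E$, i.e.\ toroidal. For the converse, suppose $E^{(n)}(g,f)$ is toroidal but $n>m$, i.e.\ $n\ge m+1$. For each quadratic $E/F$, being $E$-toroidal forces $\ord_{s=0}\bigl(L(\chi,\tfrac12+s)L(\chi\chi_E,\tfrac12+s)\bigr)\ge n\ge m+1$, and since the first factor contributes exactly $m$, we get $\ord_{s=0}L(\chi\chi_E,\tfrac12+s)\ge 1$, i.e.\ $L(\chi\chi_E,\tfrac12)=0$ for \emph{every} quadratic character $\chi_E$. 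This contradicts the non-vanishing theorem for quadratic twists, so $n\le m$, proving the theorem; the displayed formula for $\mathcal E_\tor$ then follows by taking spans, using that $\mathcal E$ is generated by all such derivatives and that (by the second bullet in the proof of Proposition \ref{lemma_zagier_for_derivatives}) lower-order derivatives are automatically toroidal once a higher one is.

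The main obstacle — and the only non-formal input — is the non-vanishing statement: \emph{for every idele class character $\chi$ of a number field $F$ with $\chi^2\neq\norm\ ^{\pm1}$, there exist infinitely many quadratic characters $\chi_E$ with $L(\chi\chi_E,\tfrac12)\neq0$}. This is exactly where the method of double (multiple) Dirichlet series enters: one forms the double Dirichlet series $\sum_{E}L(\chi\chi_E,\tfrac12)\,|d_E|^{-w}$ (suitably normalized, summing over quadratic extensions / square-free moduli), establishes its meromorphic continuation and a functional equation via the "two-variable" symmetry, locates its polar behaviour, and reads off from the location of a pole that the sum of the central values cannot vanish identically — hence some, indeed a positive proportion of, individual central values are nonzero. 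I would cite this from Friedberg--Hoffstein--Lieman (and Chinta) rather than reproving it; the delicate points are (a) handling an arbitrary base field $F$ rather than $\Q$, which is standard but needs the adelic/Hecke-character formulation, and (b) making sure the excluded characters $\chi^2=\norm\ ^{\pm1}$ (where $\mathcal P(\chi)$ is reducible and residues appear, treated separately in §\ref{section:torres}) are genuinely the only exceptions, so that the dichotomy "$L(\chi\chi_E,\tfrac12)$ vanishes for all $E$" versus "$\chi$ itself is a high-order zero of $L(\cdot,\tfrac12)$" is clean. Everything else in the proof is bookkeeping with the Leibniz-rule identity of Proposition \ref{lemma_zagier_for_derivatives}(\ref{wiet1}) and the definition of $\mathcal A_\tor$ as an intersection over all $E$.
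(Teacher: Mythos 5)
Your argument is correct and matches the paper's proof in substance: both reduce the statement, via Proposition \ref{lemma_zagier_for_derivatives}(\ref{wiet2}) and the additivity of vanishing orders in the product $L(\cdot,\tfrac12)\,L(\cdot\,\chi_E,\tfrac12)$, to the assertion that for every $\chi$ there exists a quadratic $E/F$ with $L(\chi\chi_E,\tfrac12)\neq 0$, which the paper establishes as Theorem \ref{DD} by the double Dirichlet series method. The only caveat is that this non-vanishing result is not a literal quotation from Friedberg--Hoffstein--Lieman (their paper treats higher-order twists, and the quadratic case over a general number field needs a hybrid of their techniques with Fisher--Friedberg), so the paper spends a full section proving it; but your reduction and overall strategy are the same.
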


\begin{proof} Recall that we know from the computation of toroidal integrals in Proposition \ref{lemma_zagier_for_derivatives} when the $n$-th derivative ${E}^{(n)}(\cdot,f)$ of an Eisenstein series is $E$-toroidal.  It suffices to prove that for all $\chi$ there exists a quadratic $E/F$ such that $L(\chi \chi_E,1/2) \neq 0$. This follows from Theorem \ref{DD} below. \end{proof}

As before, we now write $\chi=\omega \cdot \norm\ ^{s_0-\frac12}$ for a finite character $\omega$, where we consider $s=s_0\in\C$ as varying parameter. We use the notation $L(\omega,s)$ for $L(\chi,1/2)$.

\begin{thm} \label{DD} Let $F$ denote a number field, let $\omega$ denote a class group character on $F$. Then there exists a quadratic field extension $E/F$ such that 
$L(\omega \chi_E,s) \neq 0. $
\end{thm}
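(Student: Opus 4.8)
The plan is to prove the non-vanishing statement $L(\omega\chi_E, s)\neq 0$ for some quadratic $E/F$ by the method of double Dirichlet series, following the circle of ideas of Friedberg--Hoffstein--Lieman. Fix the class group character $\omega$ and the complex point $s$ at which we want non-vanishing. We form the double Dirichlet series
\begin{equation*}
  Z(s,w) \ = \ \sum_{d} \frac{L(\omega\chi_d, s)\, a(s,d)}{\norm{d}^{w}},
\end{equation*}
where $d$ runs over (a suitable set of representatives for) square classes / fundamental discriminants in $F$ giving the quadratic extensions $E=F(\sqrt d)$, the $\chi_d$ are the associated quadratic characters, and $a(s,d)$ is a correcting factor (a finite-order ``fudge'' Dirichlet polynomial in $\norm d$) inserted to make the object have clean functional equations. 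Interchanging the two summations, $Z(s,w)$ can also be written as a sum over the ``numerator'' variable of quadratic Dirichlet $L$-values $L(\omega \norm\ ^{\cdots}, w)$ twisted by Gauss-type sums in $d$, so that $Z(s,w)$ enjoys \emph{two} functional equations: one in $s$ (coming from the functional equation of each $L(\omega\chi_d,s)$) and one in $w$ (coming from the functional equation in the other aspect).

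The key steps, in order: (1) Construct $Z(s,w)$ carefully over the number field $F$, dealing with the class group of $F$, the units, and the $S$-integers where $S$ contains the archimedean and dyadic places; here the quadratic reciprocity / Gauss sum computations over $F$ are what force the precise shape of the correction factors $a(s,d)$. (2) Establish meromorphic continuation of $Z(s,w)$ to a large region of $\C^2$ and locate its polar divisor: the group of functional equations generated by the two reflections (an infinite dihedral-type group, or a finite Weyl-type group depending on the setup) tiles the plane and, combined with a convexity/Phragmén--Lindelöf argument à la Bochner's tube theorem, yields continuation past the region of absolute convergence. (3) Identify the polar line: $Z(s,w)$ has a pole along a line such as $w = 1 - \tfrac{\cdot}{2}$ (roughly, where the average of $L(\omega\chi_d,s)$ over $d$ is being extracted), with residue proportional to $\zeta_F$-type factors times a Dirichlet series in $s$ that is \emph{not identically zero}. (4) Conclude: if $L(\omega\chi_d,s)=0$ for \emph{every} admissible $d$, then $Z(s,w)\equiv 0$ in $w$, contradicting the existence of that pole; hence some $L(\omega\chi_E,s)\neq 0$.

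A subtlety to handle is that we need non-vanishing at the specific value $s$ supplied by Theorem~\ref{Eis}'s application (i.e.\ $s$ on the critical line, since $\chi$ there satisfies $\Re(\chi)=0$, equivalently $\Re(s_0)=1/2$), not merely ``for all but finitely many'' or ``on average'' — so the residue computation must show the relevant Dirichlet series in $s$ is nonzero \emph{at that point}, or one invokes that a nonzero meromorphic function of $s$ has isolated zeros together with an extra averaging argument over a second family to eliminate the exceptional $s$. In the present paper's logic it actually suffices to know that for each $\chi$ there exists \emph{some} $E$ with $L(\chi\chi_E,1/2)\neq 0$, so a clean ``$Z(s,\cdot)\not\equiv 0$ forces some summand nonzero'' argument at fixed $s$ is enough.

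The hard part will be step (2) together with the bookkeeping in step (1): getting the functional equations of $Z(s,w)$ exactly right over a general number field — tracking Gauss sums, the finitely many ``bad'' local factors, units and the class group, and the correction polynomials $a(s,d)$ — and then leveraging the resulting group of symmetries to obtain continuation to a neighborhood of the polar line. Once the analytic continuation and the location of the pole are in hand, the non-vanishing conclusion is essentially formal. In writing this up I would either reproduce the Friedberg--Hoffstein--Lieman construction adapted to $F$, or, if an off-the-shelf reference covers class group characters over number fields, cite it and only indicate the modifications; the cleanest exposition is probably to set up $Z(s,w)$ as a Dirichlet series with an Euler product ``up to finitely many places'' and verify the two functional equations place by place.
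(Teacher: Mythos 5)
Your outline matches the paper's strategy — construct a double Dirichlet series over $F$ following Friedberg–Hoffstein–Lieman, get meromorphic continuation in $(s,w)$, and derive a contradiction from the pole at $w=1$ — but you leave the decisive step vague in exactly the place where a concrete identification is needed, and you miss two further points that the argument cannot do without.

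The key gap is your step (3)/(4). You say the residue at the polar line is ``proportional to $\zeta_F$-type factors times a Dirichlet series in $s$ that is not identically zero,'' and then you worry that at the particular $s_0$ of interest this residue might accidentally vanish, suggesting an extra averaging over a second family. In fact the residue at $w=1$ (for $\rho=1$ in the ray-class decomposition) is an explicit nonzero constant times $L_S(\omega^2, 2s)$. That is the whole point: after using the functional equation and nonvanishing on $\Re(s)=1$ to reduce to $\frac12 \leq \Re(s_0) < 1$, one has $1\leq \Re(2s_0)<2$, so $L(\omega^2,2s_0)\neq 0$ automatically (no Dirichlet $L$-series has a zero in $\Re\geq 1$). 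No ``exceptional $s$'' can occur and no second averaging family is needed; the residue computation already gives the nonvanishing at every relevant $s_0$ simultaneously. Without identifying the residue as $L(\omega^2,2s)$, you do not have a proof — you have a program with the hardest verification unperformed, and a proposed workaround (the second averaging family) that is not actually available here.

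Two further issues. First, the series $Z(s,w)$ converges only for $\Re(s)$ large, but your $s_0$ lies in the critical strip. To conclude that vanishing of each term forces $Z(s_0,w)\equiv 0$ you must first know the $w$-Dirichlet series converges for $\Re(w)$ large at that fixed $s_0$; this requires a Phragmén–Lindelöf bound in the $d$-aspect for $L_{S\cup S_d}(\omega\chi_d,s_0)\,a(\omega,s_0,d)$. You mention convexity only in the context of analytic continuation (à la Bochner), not for this convergence step, which is logically distinct. Second, when you sum over quadratic twists, the trivial twist $d$ principal-square contributes $L_S(\omega,s_0)$ itself, about which the hypothesis says nothing; you must subtract that single term before asserting the sum vanishes identically. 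The paper handles the class-group bookkeeping by working with $Z_{\omega,\rho}$ for ray-class characters $\rho$ and then projecting onto principal $d$ via $Z^0_\omega = h_C^{-1}\sum_\rho Z_{\omega,\rho}$; your proposal gestures at the class-group issue but does not make this projection, which is what guarantees that the surviving $d$'s actually give genuine quadratic field extensions $E/F$.
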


\begin{remark} Before we start with the proof, we make some incomplete historical remarks. Non-vanishing of quadratic twists can be proven by sieve methods, but only for a restricted set of number fields. A method that works more uniformly is that of multiple Dirichlet series (so Fourier coefficients of metaplectic Eisenstein series); unfortunately, the result we need is not literally in the existing literature on multiple Dirichlet series, but rather arises from a combination of existing methods. We observe that for $\Re(s) \neq \frac12$, the  result can be proven using ``unweighted'' double Dirichlet series (see Chinta, Friedberg and Hoffstein \cite{CFHasymp} Thm.\ 1.1). To extend to the (for us interesting) range $\Re(s)=\frac12$, one needs to analytically continue the double Dirichlet series with weights; for higher order characters and a general number field, this is done by Friedberg, Hoffstein and Lieman in \cite{FHL}; and for quadratic twists of function fields by Fisher and Friedberg \cite{FF}. It is the methods of these latter two sources that we will combine. Since proofs of many facts are literally the same, we will not repeat them here, but we will set up all required notations.   One can go a (small) step further and combine the method with Tauberian theorems to establish lower bounds on the number of non-vanishing twists of bounded conductor, but we will not need those. Also, we refrain from discussing averaging of toroidal integrals \emph{directly}, without first relating them to $L$-series, cf.\ also Remark \ref{direct}. 
\end{remark}

\begin{proof}[Proof of Theorem \ref{DD}]
First note that the $L$-series we consider are `completed' by the correct archimedean factors, so they do not have trivial zeros outside the critical strip. 

Because of the functional equation, we can asume that $\frac12 \leq \Re(s) \leq 1$. Since $L(\omega \chi_E,s)$ does not vanish on $\Re(s)=1$ (\cite{MM} Ch.\ 1, \S 4), we can even assume $\frac12 \leq \Re(s) < 1$.

The strategy of the proof is now the following: we assume by contradiction that all non-trivial twists $L(\omega \chi_E, s)$ vanish at $s=s_0$. Then the (analytically continued) double Dirichlet series $Z^0_\omega(s,w)$ (to be defined below), with the trivial twist extracted, vanishes identically in $w$ for $s=s_0$. But it has residue at $w=1$ a non-zero constant times $L(\omega, 2s_0)$. Hence we find $L(\omega, 2s_0)=0$, and this is impossible if  $\frac12 \leq \Re(s_0) < 1$. 

To define the double Dirichlet series in a rigorous way, we follow \cite{FHL}, \S 1. Since the class number of $F$ is not necessarily one, the most natural double Dirichlet series (and the one that has a natural analytic continuation and set of functional equations) doesn't only sum over quadratic twists $\chi_E$, but rather over more general characters on a ray class group. We now introduce this series first. 

Let $S=S_f\cup S_\infty$ denote a finite set of places of $F$ that contains all infinite places $S_\infty$ of $F$ and a set $S_f$ of finite places such that the ring of $S_f$-integers has class number one. For $v$ a finite place corresponding to an ideal ${\mathfrak p}_v$, let $q_v=|\mathcal{O}/\mathfrak{p}_v|$. Set $C=\prod_{v\in S_f} \mathfrak{p}_v^{n_v}$, where $n_v=1$ for $v$ not above $2$, and for $v$ above $2$, $n_v$ is so large that any $a \in F_v$ with $\ord_v(a-1) \geq n_v$ is a square in $F_v$. Let $H_C$ denote the ray class group of modulus $C$, let $h_C:=|H_C|$ and set $$R_C = H_C \otimes \Z/2 = \Z/a_1 \times \dots \times \Z/a_r.$$ Choose generators $b_i$ for $\Z/a_i$, and choose a set $\mathcal{E}_0$ of ideals prime to $S$ that represent the $b_i$. For any $E_0 \in \mathcal{E}_0$, let $m_{E_0}$ denote an element of $F^*$ that generates the (principal) ideal $E_0 \mathcal{O}_S$. Let $\mathcal{E}$ denote a set of representatives of $R_C$ that are of the form $E=\prod E_0^{n_{E_0}}$,
where the $E_0$ are elements of $\mathcal{E}_0$ and the $n_{E_0}$ are natural numbers, and set $m_E=\prod m_{E_0}^{n_{E_0}}$ with the convention that $\mathcal{O} \in \mathcal{E}$ with $m_{\mathcal{O}}=1$. Then $E \mathcal{O}_S = (m_E)$. Let $I(S)$ denote the set of fractional ideals coprime to $S_f$. For $d,e \in I(S)$ coprime, write $d=(a)EG^2$ with $E \in \mathcal{E}, a \in F^*, a\equiv 1 \mbox{ mod } C, G \in I(S)$; and define $$\chi_d(e) = \left( \frac{d}{e} \right) := \left( \frac{am_E}{e} \right).$$ This is well-defined (cf.\ \cite{FHL}, Prop.\ 1.1), it does not depend on the decomposition of $d$ (but it does depend on the choice of $m_E$).
For $d$ principal, $\chi_d$ is the usual quadratic character for $F(\sqrt{d})/F$ (cf.\ Notation \ref{not:tori}). 

Let $L_S(\omega,s)$ denote the $L$-series of $F$ for the class group character $\omega$, but with the Euler factors corresponding to the places in $S$ removed. For $d \in I(S)$, let $S_d$ denote the set of primes above $d$. Let $J(S)$ denote the set of integral ideals in $I(S)$. For $d \in J(S)$, let $|d|$ denote its norm. Write $d=d_0 d_1^2$ with $d_0$ squarefree. 

We define the weight factor to be
$$ a(\omega,s,d):=\sum_{{e_i \in J(S)}\atop{e_1e_2 | d_1}} \frac{\mu(e_1) \chi_d(e_1) \omega(e_1 e_2^2)}{|e_1|^s |e_2|^{2s-1}}, $$
where $\mu$ is the M\"obius function. Let $\rho$ denote a character on the idele class group unramified outside $S$; this will be used later on to filter out principal ideals, which are the ones we are interested in. 
We define the double Dirichlet series as 
$$Z_{\omega,\rho}(s,w):=\sum_{d \in J(S)} \frac{L_{S \cup S_d}(\omega \chi_d,s) \rho(d)}{|d|^w}\cdot a(\omega,s,d). $$
This is convergent for $\Re(s)$ and $\Re(w)$ sufficiently large (say, $\geq 1$). 
The following properties are proven in exactly the same way as in \cite{FHL} (The only  difference to \cite{FHL}, which treats the case of twists by characters of higher order, is the set of functional equations, which here is of order 12 instead of 32, cf.\ \cite{FHL}, Remark 2.6.):
\begin{enumerate}
\item The function $Z_{\omega,\rho}(s,w)$ admits a meromorphic continuation to $\C^2$;
\item The poles of $Z_{\omega,\rho}(s,w)$ are located on the union of the lines $$w=0, w=1, s=0, s=1, w+s=\frac12 \mbox{ and } w+s=\frac32;$$ 
\item If $\rho \neq 1$, then $Z_{\omega,\rho}(s,w)$ is holomorphic at $w=1$; if $\rho=1$ and $s \neq 1/2$, $Z_{\omega,\rho}(s,w)$ has a simple pole at $w=1$. If $\rho=1$ and $s=\frac12$, $Z_{\omega,\rho}(s,w)$ has a double pole at $w=1$, and $L_S(\omega^2,2s)$ has a simple pole. We have $$ \lim\limits_{w \rightarrow 1} (w-1)Z_{\omega,\rho}(s,w) = \left\{ \begin{array}{ll} 0 & \mbox{ if } \rho \neq 1; \\ (\Res_{w=1} \zeta_F(w)) \cdot \left( \prod_{v \in S_f} \zeta_{F,v}(1)^{-1}\right) \cdot L_S(\omega^2,2 s) & \mbox{ if } \rho = 1, \end{array} \right.$$ 
\end{enumerate}
(cf.\ \cite{CFHsurvey}, Section 5; see also Section 5.3 in \emph{loc.\ cit.} for a computation of  the principal part of $Z_{\omega,\rho}(s,w)$ around $w=1$, which we will not need here.)

Let \begin{equation} \label{defZ0} Z^0_{\omega}(s,w):=\sum_{{d \in J(S)}\atop{[d]=0}} \frac{L_{S \cup S_d}(\omega \chi_d,s)}{|d|^w} \cdot a(\omega,s,d), \end{equation} denote the modified double Dirichlet series, where we only sum over principal ideals $d$ (indicated by the fact that their class $[d]$ is trivial in $R_C$).  Note that by plugging in the decomposition of the characteristic function of the class $[0]$ as $h_C^{-1} \sum\limits_{\rho \in \hat{R_C}} \rho$, we find
$$ Z^0_\omega(s,w) = \frac{1}{h_C} \sum_{\rho \in \hat{R_C}} Z_{\omega, \rho}(s,w). $$
Hence $Z^0_\omega(s,w)$ inherits an analytic continuation from $Z_{\omega,\rho}(s,w)$. 
Using the above computation of residues, we find that \begin{equation}\label{resD} \lim\limits_{w \rightarrow 1} (w-1)Z^0_\omega(s_0,w) = c \cdot L(\omega^2,2s_0), \end{equation}
where $c$ is some non-zero constant. 

We can now finish the proof of the theorem. We are assuming that all `principal' twists $L(\omega \chi_d, s_0)$ ($[d]=0$, $\chi_d$ non-trivial) vanish at some $s_0$ with $\frac12 \leq \Re(s_0) < 1$.  Note first that this obviously implies the vanishing of all twists for the modified $L$-series with the $S \cup S_d$-Euler factors removed. 

A slight complication arises since the $s_0$ we consider are outside of the region of absolute convergence of the series $Z^0_\omega(s,w)$, but we can use a convexity estimate to get that for such $s_0$ and $\Re(w)$ \emph{large enough}, the double Dirichlet series $Z^0_\omega(s_0,w)$ as defined in (\ref{defZ0}) will also converge. This is because of Phragm\'en-Lindel\"of estimates in the $d$-aspect of the form $$|L_{S \cup S_d}(\omega \chi_d,s_0)a(\omega,s_0,d)| \ll |d|$$ (cf.\ \cite{CFHsurvey}, 3.3), so $\Re(w)>2$ will do. 

Now recall our hypothesis that $L_{S \cup S_d}(\omega \chi_d, s_0)=0$ for all principal $d$ with $\chi_d \neq 1$, i.e., $d$ not a square. Hence if we substract the terms for which $d=e^2$ is a square from (\ref{defZ0}), we find the identically zero function \begin{equation} \label{fam} Z^0_{\omega}(s_0,w)- \sum_{{e \in J(S)}\atop{[e^2]=0}} \frac{L_{S\cup S_{e}}(\omega,s_0)}{|e|^{2w}}\cdot a(\omega,s_0,e^2) = 0 \end{equation}  (first for $\Re(w)$ sufficiently large, hence after analytic continuation, for all $w$).

We now prove that the term we have subtracted off doesn't have a pole at $w=1$; actually, it converges absolutely at $w=1$.  Write the term as \begin{equation} \label{brr}  L_S(\omega,s_0) \cdot \sum_{{e \in J(S)}\atop{[e^2]=0}} \frac{B_{e}}{|e|^{2w}}\cdot a(\omega,s_0,e^2), \end{equation} where $B_e$ consists of the reciprocals of the (finitely many) $(S_{e}\setminus S)$-Euler factors of $L(\omega,s_0)$. Essentially, the absolute convergence is due to the fact that the exponent of $|e|$ is $\approx 2$ if $w \approx 1$ and the other factors are small in $|e|$. We present some details. First of all, note that $L_S(\omega,s_0)$ doesn't have a pole for $\frac12 \leq \Re(s_0) < 1$. 
We estimate for $\Re(s_0) \geq 1/2$ $$\norm{B_e} \ = \ \norm{\prod_{\mathfrak{p}\in S_e\setminus S} \left( 1-\frac{\omega(\mathfrak{p})}{|\mathfrak{p}|^{s_0}} \right)} \ \leq \ 2^{\varpi(|e|)[F:\Q]} \ \leq \ d(|e|)^{[F:\Q]} \ \ll \ |e|^{1/3}$$ (where $\varpi(n)$ is the number of positive prime divisors of an integer $n$, and $d(n)$ is the number of positive divisors of $n$, e.g.\ \cite{HW} Section 22.13). We estimate the other factor as $$ \norm{a(\omega,s_0,e^2)} \ = \ \norm{\sum_{{e_1,e_2\in J(S)}\atop{ e_1e_2|e}} \frac{\mu(e_1)\omega(e_1e_2^2)}{|e_1|^{s_0}|e_2|^{2s_0-1}}} \ \leq  \sum_{{e_1,e_2\in J(S)}\atop{ e_1e_2|e}} 1 \ \leq  \ d(|e|)^{2[F:\Q]} \ \ll  \ |e|^{1/3}, $$ since $\norm{\mu(e_1)\omega(e_1e_2^2)}\leq1$ and $ |e_1|^{s_0}|e_2|^{2s_0-1} \geq 1$ for $\Re(s_0) \geq \frac12$. We combine this into the estimate $$\sum_{{e \in J(S)}\atop{[e^2]=0}} \norm{\frac{B_{e}}{|e|^{2w}}\cdot a(\omega,s_0,e^2)}\quad \ll \quad k\sum_{{e \in J(S)}\atop{[e^2]=0}} \norm{\frac{1}{|e|^{2w-2/3}}}, $$ for some constant $k$, the latter sum being an absolutely convergent series for $w=1$.

We conclude from this and equation (\ref{fam}) that $Z^0_\omega(s_0,w)$ also doesn't have a pole at $w=1$, i.e.,  $ \lim\limits_{w \rightarrow 1} (w-1)Z^0_\omega(s_0,w)=0$. Then by (\ref{resD}), we find that $L(\omega^2,2s_0)=0$ with $1 \leq \Re(2s_0)<2$, which is impossible. This finishes the proof. 
\end{proof}

\section{Toroidal residues of Eisenstein series}
\label{section:torres}

Let $\chi$ be a character of the idele class group such that $\chi^2=\norm \ ^{\pm1}$ and $f=f_\chi(0)\in\mathcal P(\chi)$. Then the Eisenstein series $E(g,f_\chi(s))$ has a simple pole at $s=0$, but the residue $$R(g,f):=\Res_{s=0} E(g,f_\chi(s))$$ is an automorphic form. More generally, we consider the ``derivatives''  
$$ R^{(n)}(g,f) := \lim_{s\to0}\frac{d^n}{ds^n} \left( s\cdot E(g,f_\chi(s)) \right) $$
for $n \geq 0$.

\begin{df}
Define $\mathcal R$ as the space of automorphic forms that is generated by the functions $R^{(n)}(g,f)$, where $n\geq0$, $f$ ranges through $\mathcal P(\chi)$ and $\chi$ ranges through all characters on the idele group such that  $\chi^2=\norm \ ^{\pm1}$.
\end{df}

\begin{thm} \label{nores}
$\cR_\tor=\{0\}$. 
\end{thm}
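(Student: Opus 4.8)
The plan is to show that the only characters $\chi$ with $\chi^2 = \norm\ ^{\pm1}$ are $\chi = \norm\ ^{\pm 1/2}$ (up to finite-order twists that square to the trivial character), and then to compute the toroidal integral of the residue directly, reducing it to the toroidal integral of a \emph{constant} function (the residue of an Eisenstein series of weight $s$ with $\chi^2 = \norm\ ^{\pm 1}$ is, up to normalization, a constant). First I would observe that $R(g,f)$ spans (together with its ``derivatives'' $R^{(n)}$) a finite-dimensional automorphic subrepresentation consisting of one-dimensional representations $\mu \circ \det$ for finite-order Hecke characters $\mu$ with $\mu^2 = 1$; this is the standard description of $\mathcal R$ for $\GL(2)$, and it follows from the fact that the residue of the Eisenstein series along the Borel is the residue of its constant term, which is a multiple of $\chi(\det g)\norm{\det g}^{1/2}$ evaluated at the pole, i.e.\ a character of $\det$.

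Next I would compute the $T$-toroidal integral of such a form. For a function of the shape $h(g) = \mu(\det g)$ with $\mu$ finite-order, we have
$$ h_T(g) = \mu(\det g) \int\limits_{\lquot{T(F)Z(\AA)}{T(\AA)}} \mu(\det t)\, dt = \mu(\det g)\int\limits_{\lquot{T(F)Z(\AA)}{T(\AA)}} (\mu \circ N_{E/F})(t)\, dt, $$
using that $\det$ restricted to $T(\AA) \cong \AA_E^\times$ is the norm $N_{E/F}$. This last integral is the integral of the Hecke character $\mu \circ N_{E/F}$ over the compact group $\lquot{E^\times \AA_F^\times}{\AA_E^\times}$, which vanishes \emph{unless} $\mu \circ N_{E/F}$ is trivial on all of $\AA_E^\times$, i.e.\ unless $\mu$ factors through the norm map, equivalently (by class field theory) unless $\mu$ is trivial or $\mu = \chi_E$. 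So for a \emph{fixed} nonzero $R$-type form attached to $\mu$, the integral is nonzero for the torus with $\chi_E = \mu$ (and for the split torus if $\mu$ is trivial, but we only range over nonsplit $E$, so one must handle $\mu = 1$ with care — see below), hence $R$ is \emph{not} toroidal: it fails the vanishing condition for at least one nonsplit $E$. Since $\cA_\tor$ is the intersection over \emph{all} quadratic $E/F$, a single non-vanishing suffices to conclude $R \notin \cA_\tor$, and the same argument applied coefficient-by-coefficient to a general element of $\cR$ (a finite sum of such $\mu\circ\det$ with distinct $\mu$, which are linearly independent as automorphic forms) kills the whole space. The ``derivatives'' $R^{(n)}$ for $n \geq 1$ are handled by the same device used in Proposition \ref{lemma_zagier_for_derivatives}: if $R^{(m)}$ were toroidal it would generate a subrepresentation of $\cA_\tor$ containing $R^{(0)} = R$, contradicting the above.

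The one genuine subtlety — and the step I expect to be the main obstacle — is the case $\mu = 1$, i.e.\ the constant functions, since for the \emph{trivial} character $\mu \circ N_{E/F}$ is trivial on $\AA_E^\times$ for \emph{every} $E$, so the toroidal integral of a constant is $c \cdot \mathrm{vol}(\lquot{E^\times \AA_F^\times}{\AA_E^\times})$, which is a \emph{nonzero} constant. Thus a nonzero constant function is never toroidal for any nonsplit $E$ — which is exactly what we want, and in fact makes the argument \emph{easier} rather than harder. So really the only thing to be careful about is making sure the identification of $\mathcal R$ with the span of $\mu \circ \det$ (for $\mu^2 = 1$ finite-order, including $\mu = 1$, which occurs from $\chi = \norm\ ^{\pm1/2}$) is correctly set up and that the measure-normalization in the toroidal integral is such that the volume of the compact quotient is genuinely nonzero. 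I would also note that no meromorphic-continuation gymnastics are needed here, in contrast to Theorem \ref{Eis}: everything is an honest convergent integral of a bounded function over a compact domain. Hence $\cR_\tor = \{0\}$.
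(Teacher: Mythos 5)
Your proof is correct but takes a genuinely different route from the paper's. The paper applies the Hecke formula of Theorem~\ref{thm_hecke} to the residue, writing $R_T(g,f) = e_T(g,f)\cdot\Res_{s=0} L(\chi,s+\tfrac12)L(\chi\chi_E,s+\tfrac12)$ and then observing that one of the two $L$-factors is $\zeta_F$ evaluated at its pole (a nonzero residue) while the other factor is a nonvanishing value of an $L$-series; the case split is on whether $\chi\norm\ ^{\mp1/2}$ is trivial or equals some $\chi_E$, and in the latter case the torus is chosen to be the one attached to $E$. You instead bypass the Hecke formula entirely: you identify the residue $R(g,f)$ as a multiple of the one-dimensional automorphic form $\mu\circ\det$ (where $\chi=\mu\norm\ ^{\pm1/2}$, $\mu^2=1$), observe that $\det$ restricted to $T(\AA)\cong\AA_E^\times$ is $N_{E/F}$, and reduce the toroidal integral to an orthogonality statement for the character $\mu\circ N_{E/F}$ on the compact group $E^\times\AA_F^\times\backslash\AA_E^\times$, which is nonzero precisely when $\mu\in\{1,\chi_E\}$. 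The choice of witnessing torus ($E$ with $\chi_E=\mu$ when $\mu\neq1$, arbitrary $E$ when $\mu=1$) then matches the paper's, and you finish the derivatives $R^{(n)}$ and general linear combinations by the same representation-theoretic argument the paper uses. Your approach is more elementary and self-contained (an honest convergent integral over a compact domain, no meromorphic continuation), while the paper's is more uniform with the Eisenstein computation in \S\ref{section:toroidal_Eisenstein}; morally they agree, since the ``nonzero residue of $\zeta_F$'' in the paper plays exactly the role of your ``nonzero volume.''

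One small imprecision to flag: you describe the subrepresentation generated by $R$ together with its derivatives $R^{(n)}$ as ``consisting of one-dimensional representations $\mu\circ\det$.'' That is not quite right --- the derivatives $R^{(n)}$ for $n\geq1$ are not contained in a sum of one-dimensional representations (the module they generate is an extension, not semisimple). Fortunately your argument never actually needs that claim: it only needs that $R^{(0)}$ is a multiple of $\mu\circ\det$, and that any nonzero subrepresentation of the $\mu$-block of $\cR$ contains $R^{(0)}$, which is what the representation-theoretic step uses. So the proof stands, but the opening description of $\cR$ should be tightened to say that the \emph{residues themselves} span the one-dimensional representations, with the derivatives lying in the larger (non-semisimple) space $\cR$.
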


\begin{proof}
We compute the toroidal integral of $R(g,f)$ for a torus $T$ corresponding to a quadratic field  $E$:
 \begin{align*}
  R_T(g,f) &= \lim_{s\to0}\ s\ E_T(g,f_\chi(s)) \\
    &= \lim_{s\to0}\ s\ e_T(g,f_\chi(s))\ L(\chi,s+\tfrac12)L(\chi \chi_E,s+\tfrac12) \\
    &= e_T(g,f)\ \Res_{s=0}\ L(\chi,s+\tfrac12)L(\chi \chi_E,s+\tfrac12). 
 \end{align*}
 
 Recall that $\chi^2=\norm \ ^{\pm 1}$, hence $\chi \norm \ ^ {\mp 1/2}$ is quadratic, so either trivial or by class field theory equal to $\chi_E$ for some quadratic field extension $E/F$. In the case that $E$ is non-trivial, we have that $$L(\chi \chi_E,s+\tfrac12)=\zeta_F(s+\tfrac12 \pm \tfrac12),$$ where $\zeta_F$ is as usual the completed zeta function of $F$ with poles at $0$ and $1$, so $$\zeta_F(s+1/2 \pm 1/2)$$ has a pole at $s=0$ (for both choices of sign). Hence,  by the above formula for $R_T(g,f)$, the toroidal integral of $R_T(g,f)$ for this $E$ cannot vanish, since the other factor $L(\chi,1/2)$ does not vanish. 

If $\chi \norm \ ^ {\mp 1/2}$ is trivial, then we choose an arbitrary non-split torus $T$. We find that $$L(\chi,s+1/2)=\zeta_F(s+1/2 \pm 1/2)$$ has a pole at $s=0$, but the other factor in the above computation of the toroidal integral, $$L(\chi \chi_E,s+1/2)=L(\chi_E,s+1/2\pm1/2)$$ doesn't vanish at $s=0$.

This also implies that $R^{(n)}(g,f)$ cannot be toroidal, since $R(g,f)$ is contained in the automorphic representation generated by this automorphic form. \end{proof}

\section{An application of Waldspurger periods: toroidal cusp forms}

In this section, we prove the following:

\begin{thm}\label{WaldTor}
 A cuspidal representation $\pi\subset\cA_0$ is toroidal if and only if $L(\pi,1/2)=0$:  
 $$ \cA_{0,\tor} = \langle \pi \in \cA_0 \ : \ L(\pi,\tfrac12)=0 \rangle. $$
\end{thm}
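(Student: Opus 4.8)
The plan is to reduce the statement to Waldspurger's formula relating toroidal periods of a cusp form to central values of (twisted) $L$-functions, and then to invoke a non-vanishing result for quadratic twists analogous to Theorem~\ref{DD} to pass from ``toroidal for one torus $T$'' to ``toroidal for all $T$''. More precisely, let $\pi\subset\cA_0$ be a cuspidal representation with trivial central character and let $T=T_d$ be a maximal non-split torus corresponding to the quadratic extension $E=F(\sqrt d)/F$, with quadratic idele class character $\chi_E$. Waldspurger's theorem (in the form used in the study of the Shimura correspondence, e.g.\ via the local--global decomposition of the global toroidal period) states that the period integral $\varphi_T$ of a vector $\varphi\in\pi$ over $\lquot{T_F Z(\AA)}{T(\AA)}$ is, up to an explicit nonzero constant and a product of \emph{local} toroidal integrals (which can be made nonzero by a suitable choice of $\varphi$), the central value $L(\pi\otimes\chi_E,1/2)$, which by automorphic induction equals $L(\pi_E,1/2)$ where $\pi_E$ is the base change of $\pi$ to $E$, and this factors as $L(\pi,1/2)\,L(\pi\otimes\chi_E,1/2)$. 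Consequently, $\pi\subset\cA_\tor(E)$ if and only if $L(\pi,1/2)\,L(\pi\otimes\chi_E,1/2)=0$.

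From this the ``only if'' direction of the theorem is the easy one: if $L(\pi,1/2)\neq0$, then I want to produce a single quadratic $E/F$ with $L(\pi\otimes\chi_E,1/2)\neq0$, so that $\pi\not\subset\cA_\tor(E)$ and hence $\pi\not\subset\cA_\tor$. This is precisely a non-vanishing statement for quadratic twists at the central point, which is provable by the double Dirichlet series method exactly as in Theorem~\ref{DD}: one forms the double Dirichlet series $\sum_d L(\pi\otimes\chi_d,s)\,a(\pi,s,d)\,\rho(d)|d|^{-w}$ over ideals $d$, analytically continues it in $(s,w)$, isolates the principal-ideal subseries $Z^0_\pi(s,w)$ by averaging over ray-class characters $\rho$, and reads off that its residue at $w=1$ is a nonzero multiple of $L(\pi,2s)$ (or of $L(\mathrm{sym}^2\pi,2s)$, depending on normalization) evaluated at $s=1/2$, i.e.\ $L(\pi,1/2)\neq0$; assuming all central twists vanished would force that residue to vanish, a contradiction. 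I would cite Friedberg--Hoffstein--Lieman / Chinta--Friedberg--Hoffstein for this input rather than reproving it, since the mechanism is identical to the proof of Theorem~\ref{DD} and the cuspidal case of these non-vanishing results is the classical one.

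Conversely, suppose $L(\pi,1/2)=0$. Then for \emph{every} quadratic $E/F$ the product $L(\pi,1/2)\,L(\pi\otimes\chi_E,1/2)$ vanishes, so by Waldspurger's identity $\varphi_T=0$ for every $T$ and every $\varphi\in\pi$ (the local constants being finite, their presence does not affect vanishing); hence $\pi\subset\cA_\tor(E)$ for all $E$, i.e.\ $\pi\subset\cA_{0,\tor}$. Combining the two directions with the fact, already noted in the excerpt, that $\cA_{0,\tor}$ is an automorphic subrepresentation and multiplicity one holds for $\GL(2)$, gives the asserted description $\cA_{0,\tor}=\langle\pi\in\cA_0:L(\pi,1/2)=0\rangle$.

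The main obstacle I expect is bookkeeping on the automorphic/representation-theoretic side rather than anything deep: first, stating Waldspurger's period formula in exactly the adelic normalization used here (trivial central character, completed $L$-functions, the symmetry point at $1/2$), and in particular checking that the local toroidal integrals appearing as proportionality factors are nonzero for a good choice of test vector $\varphi$ at every place — this is a local non-vanishing statement that must be invoked carefully at ramified and archimedean places (Tunnell--Saito-type local conditions are automatically satisfied here because the central character is trivial and we are twisting by quadratic characters, so the relevant local epsilon factors are $+1$). Second, one must make sure the double Dirichlet series argument for the cuspidal twist non-vanishing at the central point genuinely covers $\Re(s)=1/2$ for a \emph{general} number field $F$ with nontrivial class group — this is exactly the technical content imported from \cite{FHL} and \cite{FF}, and as in the proof of Theorem~\ref{DD} it should be quoted verbatim rather than redone.
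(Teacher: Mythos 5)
Your argument rests on the same key input as the paper: Waldspurger's period identity, which says that $\pi$ is $T$-toroidal if and only if $L(\pi,\tfrac12)\,L(\pi\otimes\chi_T,\tfrac12)=0$, and then a non-vanishing statement for quadratic twists to show that if $L(\pi,\tfrac12)\neq0$ some torus detects $\pi$. The route you take to the non-vanishing, however, is genuinely different from the paper's. You propose to re-run the double Dirichlet series machinery of Theorem~\ref{DD} with $\pi$ in place of $\omega$; the paper instead quotes Friedberg--Hoffstein's Theorems~A and~B from \cite{FH} directly, and this forces a case split that your sketch elides: one must treat separately $\pi$ not self-contragredient (Thm.~A applies), $\pi$ self-contragredient with $\epsilon(\pi,\tfrac12)=-1$ (here $L(\pi,\tfrac12)=0$ automatically, so nothing to prove), and $\pi$ self-contragredient with $\epsilon(\pi,\tfrac12)=+1$ (Thm.~B applies). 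Your double Dirichlet series approach would, in principle, handle all cases uniformly --- the residue at $w=1$ of the cuspidal analogue of $Z^0$ is a nonzero multiple of $L(\mathrm{sym}^2\pi,1)$ (not ``$L(\pi,\tfrac12)$'' as you wrote), which never vanishes, so some nontrivial principal twist must be nonzero regardless of $\epsilon(\pi,\tfrac12)$. That is a cleaner statement, but it amounts to reproving the relevant part of \cite{FH}, and it is not clear that the analytic continuation of the cuspidal weighted double Dirichlet series over a general number field is literally in \cite{FHL} or \cite{CFHsurvey} as you suggest citing; the paper sidesteps this by using the already-established Theorems~A and~B, which are proved by exactly this circle of ideas.

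Two smaller remarks. First, the way you verify the local hypothesis in Waldspurger's theorem (appealing to Tunnell--Saito and the triviality of the central character) is not what the paper does: the paper checks Waldspurger's condition~(i) via \cite[Lemme~8(iii)]{Waldspurger2} together with the observation that the local components in play are not square-integrable; your statement that ``the relevant local epsilon factors are $+1$'' because the central character is trivial and the twists are quadratic is not a correct justification in general and would need to be replaced by the actual local argument. Second, you have the ``if''/``only if'' labels reversed: the easy implication is $L(\pi,\tfrac12)=0\Rightarrow\pi$ toroidal (the product in Waldspurger's formula vanishes for every $T$), while the implication needing the non-vanishing input is the converse. This is only a labeling slip, but it is worth fixing since it makes the logical structure of your write-up harder to follow.
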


\begin{proof}
 A formula of Waldspurger (\cite[Prop.\ 7]{Waldspurger2}) shows that the $T$-period of an automorphic form $f$ in an irreducible cuspidal representation $\pi$ is a non-zero multiple of 
 \[
  L(\pi,\tfrac12)\cdot L(\pi\otimes\chi_T,\tfrac12).
 \]
 Thus $\pi$ is toroidal if $L(\pi,1/2)=0$.

 We are left to prove the reverse implication. Assume that $L(\pi,1/2)\neq 0$. Note that all automorphic representation of $\GL(n)$ with trivial central character are self-contragredient. Thus, the functional equation of the $L$-series of $\pi$ in $s=1/2$ is
 \[
  L(\pi,1/2) \quad = \quad \epsilon(\pi,1/2) \ \cdot \ L(\pi,1/2),
 \]
 and necessarily $\epsilon(\pi,1/2)=1$. This allows us to apply a theorem of Friedberg and Hoffstein: let $\pi_v$ be representations of $G(F_v)$ such that $\pi \simeq \otimes'\pi_v$, where $v$ ranges over all places; let $S$ be the (finite) set of places $v$ such that $\pi_v$ is square integrable; let $\xi$ be the trivial Hecke character. Then \cite[Thm.\ B (1)]{FH} states that there are infinitely many different nonconjugate tori $T$ such that $L(\pi\otimes\chi_T,1/2)\neq0$ and such that for all $v\in S$, the local character $\chi_v$ is trivial, i.e.\ $T_v$ is split. In particular, there is such a non-split torus $T$.

 We want to apply to apply \cite[Thm.\ 2, p.\ 221]{Waldspurger2} (in the ``situation globale'', where the quaternion algebra is chosen to split), which implies that $\pi$ is not $T$-toroidal. To do so, we have to verify that condition (i) in loc.\ cit.\ is satisfied. By \cite[Lemme 8 (iii)]{Waldspurger2}, condition (i) is satisfied for all local factors $\pi_v$ that are not square integrable. If $v\in S$, then $T_v$ is split and \cite[Lemme 8 (ii)]{Waldspurger2} implies that condition (i) holds for square integrable $\pi_v$.
 
 This shows that $\pi$ is not toroidal, which concludes the proof of the theorem.
\end{proof}

\begin{remark}
Theorem {\cite[Thm.\ 4, p.\ 288]{Waldspurger3}} of Waldspurger says
that there exists a $\chi_d$ for $d\in F^\times$ such that $L(\pi\otimes\chi_d,1/2)\neq0$, but 
without the claim that $\chi_d$ is nontrivial. However, if there exists one such $d$ (square or not), there exist infinitely many, as may be seen from Lemma 7.1 in \cite{bbcfh}, which shows that the Dirichlet series occuring as Mellin transform of the series $t(\sigma)$ on p.\ 289 of the proof in \cite{Waldspurger3} cannot have only finitely many non-zero coefficients.
\end{remark}

\begin{remark}
There do exist number fields $F$ for which there exist non-trivial toroidal cusp forms. We only need $\pi$ to be a cusp form with $L(\pi,1/2)=0$. This happens when the root number is $-1$, which is for example the case for a cuspidal lift of a classical holomorphic cusp form to an imaginary quadratic field  (cf.\ also Waldspurger \cite{Waldspurger3}, Remark on p.\ 282). The argument also shows that there are no toroidal cusp for for $F=\Q$. 
\end{remark}

\bibliographystyle{plain}

\end{document}